\begin{document}
\newtheorem{theorem}{Theorem}
\newtheorem{lemma}[theorem]{Lemma}
\newtheorem{claim}[theorem]{Claim}
\newtheorem{cor}[theorem]{Corollary}
\newtheorem{prop}[theorem]{Proposition}
\newtheorem{definition}{Definition}
\newtheorem{question}[theorem]{Open Question}

\def\cA{{\mathcal A}}
\def\cB{{\mathcal B}}
\def\cC{{\mathcal C}}
\def\cD{{\mathcal D}}
\def\cE{{\mathcal E}}
\def\cF{{\mathcal F}}
\def\cG{{\mathcal G}}
\def\cH{{\mathcal H}}
\def\cI{{\mathcal I}}
\def\cJ{{\mathcal J}}
\def\cK{{\mathcal K}}
\def\cL{{\mathcal L}}
\def\cM{{\mathcal M}}
\def\cN{{\mathcal N}}
\def\cO{{\mathcal O}}
\def\cP{{\mathcal P}}
\def\cQ{{\mathcal Q}}
\def\cR{{\mathcal R}}
\def\cS{{\mathcal S}}
\def\cT{{\mathcal T}}
\def\cU{{\mathcal U}}
\def\cV{{\mathcal V}}
\def\cW{{\mathcal W}}
\def\cX{{\mathcal X}}
\def\cY{{\mathcal Y}}
\def\cZ{{\mathcal Z}}

\def\A{{\mathbb A}}
\def\B{{\mathbb B}}
\def\C{{\mathbb C}}
\def\D{{\mathbb D}}
\def\E{{\mathbb E}}
\def\F{{\mathbb F}}
\def\G{{\mathbb G}}
\def\I{{\mathbb I}}
\def\J{{\mathbb J}}
\def\K{{\mathbb K}}
\def\L{{\mathbb L}}
\def\M{{\mathbb M}}
\def\N{{\mathbb N}}
\def\O{{\mathbb O}}
\def\P{{\mathbb P}}
\def\Q{{\mathbb Q}}
\def\R{{\mathbb R}}
\def\S{{\mathbb S}}
\def\T{{\mathbb T}}
\def\U{{\mathbb U}}
\def\V{{\mathbb V}}
\def\W{{\mathbb W}}
\def\X{{\mathbb X}}
\def\Y{{\mathbb Y}}
\def\Z{{\mathbb Z}}

\def\E{{\mathbf E}}
\def\Fp{\F_p}
\def\ep{{\mathbf{e}}_p}
\def\Nm{{\mathrm{Nm}}}
\def\lcm{{\mathrm{lcm}}}

\def\scr{\scriptstyle}
\def\\{\cr}
\def\({\left(}
\def\){\right)}
\def\[{\left[}
\def\]{\right]}
\def\<{\langle}
\def\>{\rangle}
\def\fl#1{\left\lfloor#1\right\rfloor}
\def\rf#1{\left\lceil#1\right\rceil}
\def\le{\leqslant}
\def\ge{\geqslant}
\def\eps{\varepsilon}
\def\mand{\qquad\mbox{and}\qquad}
\newcommand{\ignor}[1]{}
\newcommand{\comm}[1]{\marginpar{%
\vskip-\baselineskip 
\raggedright\footnotesize
\itshape\hrule\smallskip#1\par\smallskip\hrule}}

\def\xxx{\vskip5pt\hrule\vskip5pt}
\def\Mat{\mathop{Mat}}
\def\gcd{\mathop{gcd}}
\def\rank{\mathop{rank}}
\def\trace{\mathop{trace}}
\def\ind{\mathrm{ind}}
\def\IM{\mathrm{Im}}

\title{{\bf Cycles in Repeated Exponentiation 
Modulo $p^n$.}}
\author{
         {\sc{Lev Glebsky}} \\
         {Instituto de Investigaci{\'o}n en Comunicacin {\'O}ptica}\\   
          {Universidad Aut{\'o}noma de San Luis Potos{\'i}} \\
         {Av. Karakorum 1470, Lomas 4a 78210}\\
          {San Luis Potosi, Mexico} \\
         {\tt glebsky@cactus.iico.uaslp.mx}}
\date{\today}
\pagenumbering{arabic}

\maketitle

\begin{abstract}
Given a number $r$, we consider the dynamical system generated by 
repeated exponentiations modulo $r$, that 
is,  by the map  $u \mapsto f_g(u)$, where 
$f_g(u) \equiv g^u \pmod r$ and $0 \le f_g(u) \le r-1$. 
The number of cycles of the defined above dynamical system 
is considered for $r=p^n$. 
\end{abstract}

\section{Introduction and formulation of results} 
Given a number $r$, we consider the dynamical system generated by 
repeated exponentiations modulo $r$, that 
is,  by the map  $u \mapsto f_q(u)$, where 
$f_q(u) \equiv q^u \pmod r$ and $0 \le f_q(u) \le r-1$.
In \cite{GleShpar} the author with Igor Shparlinski considered the case where
$r$ is a prime. We gave some estimates on number of 
$1-,2-,3-$periodic
points of $f$. We believe that our estimates are very far from being strict
 (but it seems that
the better estimates are not known). Maybe one of the difficulties 
of the problem 
is that $f$ is not an algebraic factor of $q^x$: 
if, for example, $\gcd(r,\phi(r))=1$ then 
one can choose representative $y\equiv x\mod r$ such that $q^y$ 
has any possible value
$\mod r$. The situation where $\gcd(r,\phi(r))$ is large may be more easy 
to deal with.
In that case, instead of considering the function $f$, 
one may consider the graph 
with edges from $x\in\Z_r$ to all $q^y \mod r$, $y\equiv x \mod r$. 
I will show that it works very well at list for $r=p^n$ with a prime $p$. 
In what follows we will suppose that $\gcd(q,p)=1$.

Let $\Gamma_{p,n,q}$ be a directed graph defined as follows:
the set of vertexes is $V(\Gamma)=\Z_{p^n}$ and the set of edges is
$E=\{(x,q^y \mod p^n)\;|\;x\in\Z_{p^n}, y \equiv x \mod p^n\}$.  
Suppose for a moment that $q$ is primitive $\mod p^n$. Then $p-1$ is
the out degree of any edge of the graph $\Gamma$.
Let $C_{p,n,q}(k)$ be the number of k-cycles (with initial vertex marked) 
in $\Gamma_{p,n,q}$. 
\begin{theorem}\label{th_main1}
$C_{p,n,q}\leq (p-1)^k$. If $q$ is primitive $\mod p$ then
$C_{p,n,q}= (p-1)^k$.
\end{theorem}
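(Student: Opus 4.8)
The plan is to prove the stronger exact identity $C_{p,n,q}(k)=m^k$, where $m$ is the multiplicative order of $q$ modulo $p$; both assertions of the theorem then follow at once, since $m\le p-1$ with equality exactly when $q$ is a primitive root modulo $p$.

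First I would record two reductions. Every vertex lying on a cycle has an incoming edge, hence is of the form $q^y\bmod p^n$ and therefore a unit; so all cycles live inside $G=\Z_{p^n}^*$, and I may replace $\Gamma_{p,n,q}$ by its restriction, with adjacency matrix $A$ of size $|G|\times|G|$. A $k$-cycle with marked initial vertex is the same as a closed walk of length $k$, so $C_{p,n,q}(k)=\trace(A^k)$. The out-neighbourhood of $x$ is exactly the coset $q^xH$, where $H=\langle q^{p^n}\rangle\le G$; writing $\mathrm{ord}_{p^n}(q)=p^a m$ with $\gcd(m,p)=1$, one checks $|H|=m$ (so the common out-degree is $m$) and that $m=p-1$ precisely when $q$ is primitive modulo $p$.

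The heart of the argument is that $q^xH$, viewed in $G/H$, equals $\bar q^{\,x}$ with $\bar q=qH$ of order exactly $p^a$; hence it depends only on $x\bmod p^a$. I would exploit this by factoring $A=R\,\Delta\,S$ through the residue-mod-$p^a$ map $\rho\colon x\mapsto x\bmod p^a$ (with $0$-$1$ matrix $R$), the exponentiation $\delta\colon e\mapsto\bar q^{\,e}$ (matrix $\Delta$), and the coset map $\beta\colon z\mapsto zH$ (matrix $S$); the cyclic invariance of the trace then gives $\trace(A^k)=\trace(W^k)$ with $W=\Delta\,S\,R$ a matrix of size only $p^a$, indexed by $e,e'\in\Z_{p^a}$. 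A direct computation gives $W_{e,e'}=\#\{x\in G:\ xH=\bar q^{\,e},\ x\equiv e'\!\!\pmod{p^a}\}$, and since reduction modulo $p^a$ is injective on each $H$-coset this equals $[\,e'\in q^{e}H_a\,]$ with $H_a=H\bmod p^a$, which is precisely the $(e,e')$-entry of the restricted adjacency matrix of $\Gamma_{p,a,q}$. Recognising that $H_a=\langle q^{p^a}\rangle$ in $\Z_{p^a}^*$ (both are the unique subgroup of order $m$) then yields the self-similar reduction $C_{p,n,q}(k)=C_{p,a,q}(k)$.

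Because $a<n$ always, and because passing from modulus $p^n$ to $p^a$ strictly decreases the $p$-part of the order of $q$, this recursion terminates. The base case is when that $p$-part is trivial: there $\bar q=1$, every out-neighbourhood equals $H$ itself, all rows of $A$ coincide with the indicator of $H$, and $\trace(A^k)=|H|^k=m^k$ is immediate. Induction then gives $C_{p,n,q}(k)=m^k$ in general, and the theorem follows. I expect the main obstacle to be the identification in the previous paragraph — verifying cleanly that after the change of variables and the cyclic trace the transfer matrix $W$ is exactly the modulus-$p^a$ instance of the same problem; this is where the interplay between the multiplicative coset of $x$ and the additive residue $x\bmod p^a$ must be controlled, and where the hypothesis $\gcd(q,p)=1$, which makes $H\to H_a$ an isomorphism, is essential.
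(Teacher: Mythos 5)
Your argument is correct, and it in fact establishes the sharper identity $C_{p,n,q}(k)=m^k$ with $m=\mathrm{ord}_p(q)$, from which both assertions of the theorem follow since $m\le p-1$ with equality exactly for primitive $q$. The route is genuinely different from the paper's. The paper keeps the full $p^n\times p^n$ adjacency matrix, observes that it consists of $p$ identical block-rows whose blocks sum to $A_{n-1}$ (because the out-neighbourhood depends only on $x\bmod p^{n-1}$, and reduction mod $p^{n-1}$ is a bijection on each out-neighbourhood), and applies a block-trace lemma to descend one level at a time, $n\to n-1\to\cdots\to 1$, finishing with the explicit $A_1$. You instead restrict to the unit group, recognize the out-neighbourhood as the coset $q^xH$, factor the adjacency matrix through $\Z_{p^a}$ and $G/H$, and use cyclicity of the trace to jump directly from modulus $p^n$ to modulus $p^a$, where $p^a$ is the exact power of $p$ dividing $\mathrm{ord}_{p^n}(q)$; the base case is the rank-one matrix $\mathbf 1$ times the indicator of $H$. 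Both proofs ultimately rest on the same two facts --- the out-neighbourhood depends on much less than $x$ itself, and the trace is invariant under cyclic permutation of a factorization (the paper's Lemma~\ref{lm1} is precisely $\trace(UV)=\trace(VU)$ for a column of identities times a row of blocks) --- but your implementation is more group-theoretic, descends faster, makes the exact count $m^k$ explicit rather than only the bound $(p-1)^k$, and disposes of the non-unit vertices cleanly at the outset. The one delicate point, which you correctly flag, is the identification of the transfer matrix $W$ with the modulus-$p^a$ adjacency matrix: it needs $a\ge 1$, since injectivity of reduction mod $p^a$ on $H$-cosets fails for $a=0$, and your choice to stop at the base case ``order of $q$ prime to $p$'' rather than recursing down to modulus $p^0$ is exactly what avoids that degeneracy.
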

\begin{cor}
The number of $k$-periodic points for $f(x)\equiv q^x\mod p^n$,
$0 \leq f(x)<p^n$ is less than  
$(p-1)^k$. 
\end{cor}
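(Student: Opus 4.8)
The plan is to view the orbit structure of $f$ as living inside the graph $\Gamma=\Gamma_{p,n,q}$ and then import the bound of Theorem~\ref{th_main1}. First I would observe that $f$ merely selects, at each vertex $x\in\Z_{p^n}$, one distinguished outgoing edge of $\Gamma$: since $0\le x<p^n$ we may use the representative $y=x\equiv x\pmod{p^n}$, and then $q^y\bmod p^n=q^x\bmod p^n=f(x)$. Thus every edge $(x,f(x))$ of the functional graph of $f$ is an edge of $\Gamma$, so this functional graph is a spanning subgraph of $\Gamma$ in which each vertex has out-degree $1$, as opposed to out-degree $p-1$ in $\Gamma$ itself.

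Next I would translate periodic points into cycles. A point $x$ is $k$-periodic for $f$ exactly when $f^k(x)=x$, and I would attach to such an $x$ the closed walk $x\to f(x)\to\cdots\to f^{k-1}(x)\to x$ of length $k$ in $\Gamma$, carrying $x$ as its marked initial vertex. Distinct periodic points give distinct marked walks, since the initial vertex is recovered from the walk; hence the number of $k$-periodic points of $f$ is at most the number $C_{p,n,q}(k)$ of $k$-cycles of $\Gamma$ with marked initial vertex. Theorem~\ref{th_main1} then yields the non-strict estimate $\#\{k\text{-periodic points}\}\le C_{p,n,q}(k)\le (p-1)^k$, which already proves the weak form of the corollary.

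The remaining, and I expect the hard, part is to sharpen $\le$ to $<$. The closed walks in the image of the above injection are precisely those that use the distinguished edge $v_{i+1}=f(v_i)$ at every step; any closed walk of $\Gamma$ that takes one of the other $p-2$ edges somewhere falls outside the image. So strictness amounts to showing this injection is never onto, i.e. that $\Gamma$ always contains at least one $k$-cycle not traced by the deterministic orbit of $f$. When $f$ has no $k$-periodic points this is immediate because $(p-1)^k\ge 1$; the delicate case is when periodic points exist, since deviating from $f$ at a single vertex need not re-close into a walk of length $k$. The cleanest route I foresee is a matrix-trace comparison: with $A$ the adjacency matrix of $\Gamma$ and $B$ that of the out-degree-one $f$-subgraph, we have $B\le A$ entrywise, the number of $k$-periodic points equals $\trace(B^k)$, and $C_{p,n,q}(k)$ counts the closed walks of length $k$, namely $\trace(A^k)$. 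The task then reduces to exhibiting a single diagonal position where $A^k$ strictly dominates $B^k$, which forces $\#\{k\text{-periodic points}\}\le (p-1)^k-1<(p-1)^k$.
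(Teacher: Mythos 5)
Your reduction---realizing the functional graph of $f$ as an out-degree-one subgraph of $\Gamma_{p,n,q}$ (the edge $(x,f(x))$ arises from the representative $y=x$), so that $k$-periodic points inject into marked $k$-cycles and, in matrix terms, $\trace(B^k)\le\trace(A_n^k)=C_{p,n,q}(k)\le(p-1)^k$---is exactly the argument the paper intends: the corollary is stated without proof as an immediate consequence of Theorem~\ref{th_main1}, and your first two paragraphs supply precisely that deduction. So for the bound with $\le$ your proposal is complete and matches the paper.

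The step you honestly flag as open, upgrading $\le$ to $<$, cannot be carried out, because the strict inequality is false in general. Take $p=2$ and $k=1$: then $(p-1)^k=1$, and for $n=1$ and any odd $q$ the map $f(x)=q^x\bmod 2$ is identically $1$, so $x=1$ is a fixed point and the number of $1$-periodic points equals $1=(p-1)^1$ (here the functional graph of $f$ coincides with $\Gamma_{2,1,q}$, so $B=A_1$ and no diagonal entry of $A_1^k$ can strictly dominate that of $B^k$). The same happens for $q=1$ and arbitrary $n$. Hence your proposed route---exhibiting a diagonal position where $A^k$ strictly exceeds $B^k$---is doomed without further hypotheses, and the corollary should be read (and was surely meant) with ``at most'' in place of ``less than''. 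In short: your argument proves everything that is actually true here and everything the paper's own theorem supports; the residual gap is an overstatement in the corollary itself, not a defect of your proof.
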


The same technique may be used to estimate the number of $k$-cyclic
points in  ``additive perturbations'' of graph $\Gamma$.
Precisely, let us define $\Gamma^{+r}_{p,n,q}$ as follows:
the set of vertexes is $V(\Gamma)=\Z_{p^n}$ and the set of edges is
$E=\{(x,q^y+c \mod p^n)\;|\;x\in\Z_{p^n}, y \equiv x 
\mod p^n\, c=-r,-r+1,\dots,r\}$.  
Let $C^{+r}_{p,n,q}(k)$ be the number of k-cycles 
(with the initial vertex marked) 
in $\Gamma^{+r}_{p,n,q}$.
\begin{theorem}\label{th_main2}
$C^{r+}_{p,n,q}(k)\leq p+rp[2p(2r+1)]^{k}(n-1)$
\end{theorem}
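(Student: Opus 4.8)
The plan is to mimic the proof of Theorem~\ref{th_main1}: translate a cycle into a fixed point of a composed map on $\Z_{p^n}$, and then count fixed points by lifting $p$-adically, one level at a time, from $\Z_{p^{m-1}}$ to $\Z_{p^m}$. The additive constants $c_i$ are the only genuinely new ingredient, and they are exactly what forces the weaker bound. First I would fix the bookkeeping. A marked $k$-cycle is a tuple $(x_0,\dots,x_{k-1})\in\Z_{p^n}^k$ (indices read mod $k$) for which there are exponents $y_i\equiv x_i\pmod{p^n}$ and constants $c_i\in\{-r,\dots,r\}$ with $x_{i+1}\equiv q^{y_i}+c_i\pmod{p^n}$. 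Since $\{q^{y}:y\equiv x_i\}$ is the coset $q^{x_i}\mu$, where $\mu$ is the group of $(p-1)$-st roots of unity mod $p^n$, each edge is encoded by a pair $(\zeta_i,c_i)$ with $\zeta_i\in\mu$, and the cycle becomes a fixed point of $H=\Phi_{k-1}\circ\cdots\circ\Phi_0$, where $\Phi_i(x)=\zeta_i q^{x}+c_i$. Summing over the discrete data $(\zeta_i,c_i)_i$ only overcounts, so it suffices to bound, for each choice of this data, the number of fixed points of $H$.

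Next comes the lifting. I would run an induction on $n$, writing $N_m$ for the $k$-cyclic points counted by $C^{+r}_{p,m,q}(k)$ and aiming at $N_m\le N_{m-1}+rp[2p(2r+1)]^k$, with base case $N_1\le p$ (only $p$ residues are available modulo $p$). The reduction map sends a cyclic point mod $p^m$ to one mod $p^{m-1}$, and each point mod $p^{m-1}$ has at most $p$ preimages; the task is to show that almost all of them lift to at most one cyclic point. This is precisely the contraction behind Theorem~\ref{th_main1}: on a fixed residue class $x\equiv a\pmod{p-1}$ one has $q^{x}=q^{a}(q^{p-1})^{(x-a)/(p-1)}$ with $q^{p-1}$ a principal unit, so $\Phi_i$ — and hence $H$ — is a $p$-adic contraction along the principal-unit direction and Hensel's lemma pins the lift uniquely, \emph{unless} the derivative of $H$ is $\equiv1\pmod p$, the resonant case in which up to $p$ lifts (or none) can occur.

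Counting the resonant, multiply-lifting points is where the stated factors come from, and is the step I expect to be the main obstacle. The resonance is a single congruence modulo $p$ on the data $(\zeta_i,c_i)$, hence is met by only a controlled proportion of combinatorial types, and two points deserve care. First, because $q^{x}$ depends on $x$ only modulo $\phi(p^m)=p^{m-1}(p-1)$ whereas the vertex $x$ is pinned only modulo $p^m$, and $p^m/\phi(p^m)<2$, each vertex admits at most two admissible exponents; this is exactly the ``$q^x$ is not an algebraic factor'' phenomenon of the introduction, and it contributes a factor $2$ per step, so the number of relevant types is at most $[\,2(p-1)(2r+1)\,]^k\le[2p(2r+1)]^k$. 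Second, for a resonant type the extra lifts are confined to an affine set cut out by the $c_i$ and by the valuations of the quantities $\zeta_i q^{x}-(\text{shift})$, which I would bound by $rp$ using $|c_i|\le r$. Multiplying, each level $m=2,\dots,n$ contributes at most $rp[2p(2r+1)]^k$ new cyclic points beyond those forced from below, and the induction closes to $N_n\le p+rp[2p(2r+1)]^k(n-1)$.

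The hard part, to repeat, is the resonant case: the additive shift $x\mapsto x+c_i$ is a $p$-adic isometry and destroys the clean contraction that made Theorem~\ref{th_main1} an exact count, so one must show both that the places where $H'\equiv1\pmod p$ are few and that at each such place the fibre of extra lifts has size $\le rp$. Controlling this interaction between the (contracting) exponential part and the (isometric) additive part, uniformly over the $[2p(2r+1)]^k$ combinatorial types and the $n-1$ levels, is the crux; everything else is the same bookkeeping as in the unperturbed case.
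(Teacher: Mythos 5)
Your proposal takes a genuinely different route from the paper, and it does not close. The paper never lifts fixed points $p$-adically: it works with the adjacency matrix $A_n$ of $\Gamma^{+r}_{p,n,q}$ and shows (Lemma~\ref{new_lm2}) that $A_n$ is dominated entrywise by a repeated-block-row matrix $M$ of the kind appearing in Theorem~\ref{th_main1} (so that $\trace(M^k)=\trace(A_{n-1}^k)$ by Lemma~\ref{lm1}) \emph{plus} a sparse error matrix $X$ with fewer than $2rp$ nonzero rows. The sparsity is the whole point: the out-neighbourhood of $x$ in the perturbed graph still depends only on $x\bmod p^{n-1}$ except when $y+c\bmod p^{n-1}$ and $y+c\bmod p^{n}$ disagree, which happens for only $2r$ values of $y$ in each of the $p$ blocks. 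Expanding $\trace((M+X)^k)$ then gives $c_n\le c_{n-1}+\Delta$, where $\Delta$ is bounded by the number of closed $k$-walks using $X$ at least once, namely $2^{k-1}\cdot 2rp\cdot((2r+1)p)^{k}$, and the recursion with $c_1\le p$ finishes the proof. No contraction, no Hensel, no resonance.

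The gap in your argument is exactly where you flag it. The entire content of the theorem is that each level contributes at most $rp[2p(2r+1)]^{k}$ extra cyclic points, and in your write-up that is the ``resonant case,'' which you call ``the main obstacle'' and ``the crux'' and then do not prove: you assert, without argument, that the locus where $H'\equiv 1\pmod p$ is met by few combinatorial types and that the fibre of extra lifts there has size at most $rp$. A plan that defers precisely this step is not a proof. Moreover the framework would resist being carried out: $q^{y}$ depends on $y$ modulo $\phi(p^{n})=(p-1)p^{n-1}$, so once $\zeta_i$ is fixed your map $\Phi_i$ factors through $x\bmod p^{n-1}$ rather than being a self-map of $\Z_{p^n}$ with a meaningful derivative in the top $p$-adic direction --- the clean lifting in the unperturbed case is determinism (the out-neighbourhood mod $p^{n}$ is already determined by $x\bmod p^{n-1}$), not contraction. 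Your ``factor $2$ per step from at most two admissible exponents'' is also unsupported: each vertex has $p-1$ admissible exponent classes, all absorbed into the choice of $\zeta_i$, and the $2^{k}$ in the stated bound actually arises from the $2^{k}$ terms in the expansion of $(M+X)^{k}$. The efficient repair is to drop the lifting framework and prove the sparsity statement of Lemma~\ref{new_lm2} directly; it replaces all of your resonance analysis.
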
 
So, $C$ grows no more than linearly in $n$ (but the number of all 
vertexes grows exponentially). 
\section{Proof of Theorem~\ref{th_main1}}
\begin{lemma} \label{lm1}
Let $A_1,A_2,...,A_r$ be elements of an associative 
(not necessarily commutative)
algebra $\cA$. Let $M\in\Mat_{n\times n}(\cA)$,
$$
M=\(\begin{array}{cccc} A_1 & A_2 &\dots & A_n\\
                       A_1 & A_2 &\dots & A_n\\
                    \vdots &\vdots &\dots &\vdots\\
                       A_1 & A_2 &\dots & A_n
     \end{array}\) 
$$
Then $\trace(M^k)=(A_1+A_2+\dots+A_r)^k$.
\end{lemma}
\begin{proof}
$$
M^k=\(\begin{array}{c}1\\
                    1\\
                \vdots\\
                    1
\end{array}\)
\(\(\begin{array}{cccc} A_1 & A_2 & \dots & A_r
\end{array}\)
\(\begin{array}{c}1\\
                    1\\
                \vdots\\
                    1
\end{array}\)\)^{(k-1)}
\(\begin{array}{cccc} A_1 & A_2 & \dots & A_r
\end{array}\) =
$$
$$
(A_1+A_2+\dots+A_r)^{k-1}\(\begin{array}{cccc} A_1 & A_2 &\dots & A_n\\
                       A_1 & A_2 &\dots & A_n\\
                    \vdots &\vdots &\dots &\vdots\\
                       A_1 & A_2 &\dots & A_n
     \end{array}\) 
$$
\end{proof}
\begin{lemma} \label{lm2}
Let $A_n$ be the adjacency matrix of $\Gamma_{p,n,q}$. Then
\begin{enumerate}
\item $A_1=\(\begin{array}{ccccc} 0 & 1 & 1 & \dots & 1 \\
                                  0 & 1 & 1 & \dots & 1 \\
                            \vdots &\vdots &\vdots &\dots & \vdots\\
                                  0 & 1 & 1 & \dots & 1
                     \end{array}\) $, if $q$ is primitive $\mod p$. 
If $q$ is not primitive then $A_1$ has the same form with some $1$ changed to
$0$. 

\item for $n>1$ $A_n=\(\begin{array}{cccc} B_1^n & B_2^n &\dots & B_p^n \\
                                           B_1^n & B_2^n &\dots & B_p^n \\
                                       \vdots &\vdots & \dots &\vdots \\
                                           B_1^n & B_2^n &\dots & B_p^n \\
                       \end{array}\)$,
where $B_j^n\in\Mat_{p^{n-1}\times p^{n-1}}(\Z)$ and $B_1^n+B_2^n+...+B_p^n=A_{n-1}$. 
\end{enumerate}
\end{lemma}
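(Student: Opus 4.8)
The plan is to work throughout with the explicit description of the out-neighbours of a vertex as a multiplicative coset, and then to read off both the block pattern and the summation identity from how that coset behaves under reduction modulo $p^{n-1}$. First I would fix coordinates on the vertex set: writing each $x\in\Z_{p^n}$ uniquely as $x=a+bp^{n-1}$ with $0\le a<p^{n-1}$ and $0\le b<p$, I use $b$ as the index of the $p\times p$ block and $a$ as the position inside a block of size $p^{n-1}$, on both the source (row) and target (column) side. The out-neighbours of $x$ are the values $q^{y}\bmod p^n$ with $y\equiv x\pmod{p^n}$, i.e. the coset $S_n(x)=q^{x}\langle q^{p^{n}}\rangle$ inside the units modulo $p^n$, where $\langle q^{p^n}\rangle$ is the cyclic group generated by $q^{p^n}$. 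Since this set never meets the non-units, part (1) falls out immediately: for $n=1$ one has $q^{p}\equiv q\pmod p$, so $S_1(x)=q^{x}\langle q\rangle=\langle q\rangle$, which is all of $\{1,\dots,p-1\}$ when $q$ is primitive and a proper subgroup otherwise; hence the column indexed by $0$ is zero, the remaining columns are $1$ (with some further $1$'s deleted in the non-primitive case), and every row coincides because $S_1(x)$ does not depend on $x$.

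Second, to obtain the repeated block-row form in part (2) I would show that $S_n(x)$ depends only on $a=x\bmod p^{n-1}$, equivalently that $q^{p^{n-1}}\in\langle q^{p^n}\rangle$. This is a short exponent computation: if $e$ denotes the multiplicative order of $q$ modulo $p$ and $t\ge 1$ is the exact power with $q^{e}\equiv1\pmod{p^{t}}$, then the order of $q$ modulo $p^{m}$ is $e\,p^{\max(0,m-t)}$, so $\gcd\!\big(\mathrm{ord}_{p^n}(q),p^n\big)$ is exactly the $p$-part of that order and $q^{p^{n-1}}$ lies in $\langle q^{p^n}\rangle$. Consequently $S_n(x)=S_n(x+p^{n-1})$, so the pattern of out-edges is identical in every block-row $b$, which is precisely the assertion that $A_n$ has the stated shape with the row of blocks $\big(B_1^n\ \cdots\ B_p^n\big)$ repeated $p$ times.

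Finally, for the identity $B_1^n+\dots+B_p^n=A_{n-1}$ I would interpret the left-hand entry at position $(a,a')$ as the number of out-neighbours $z$ of a source $x$ with $x\bmod p^{n-1}=a$ satisfying $z\equiv a'\pmod{p^{n-1}}$, and show this equals the indicator that $a\to a'$ is an edge of $\Gamma_{p,n-1,q}$. Reducing the defining congruence modulo $p^{n-1}$ carries an edge of $\Gamma_{p,n,q}$ to an edge of $\Gamma_{p,n-1,q}$, so reduction maps $S_n(x)$ into $S_{n-1}(a)$; the whole content of the identity is that this map is a \emph{bijection} between the two cosets. The main obstacle, and the only genuinely arithmetic step, is exactly this bijectivity. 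I would establish it by checking that $\langle q^{p^n}\rangle$ has the same order $e$ modulo $p^{n-1}$ as it does modulo $p^{n}$ — once more from the order formula above together with the coprimality $\gcd(p,p-1)=1$ — so that reduction is a surjection of cyclic groups of equal cardinality and is therefore injective on the coset $S_n(x)$. Since $|S_n(x)|=|S_{n-1}(a)|=e$, each $a'\in S_{n-1}(a)$ then has exactly one preimage and every other $a'$ has none, which gives $\sum_j B_j^n=A_{n-1}$ and completes the proof.
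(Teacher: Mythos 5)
Your proof is correct and rests on the same two-step decomposition as the paper's: (a) the out-neighbour set of $x$ depends only on $x \bmod p^{n-1}$, which yields the repeated block rows, and (b) reduction modulo $p^{n-1}$ is a bijection from the out-neighbour set of $x$ in $\Gamma_{p,n,q}$ onto that of $x \bmod p^{n-1}$ in $\Gamma_{p,n-1,q}$, which yields $B_1^n+\dots+B_p^n=A_{n-1}$. Where you differ is in how these two facts are verified. The paper proves (a) via the Chinese Remainder Theorem applied to the exponents modulo $(p-1)p^{n-1}$, and (b) by writing $q\equiv g^r\pmod{p^n}$ for a primitive root $g$ and checking by hand that $q^{b_1p^{n-1}}\equiv q^{b_2p^{n-1}}\pmod{p^{n-1}}$ forces the same congruence modulo $p^{n}$. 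You instead package the out-neighbour set as the multiplicative coset $q^x\langle q^{p^n}\rangle$ and deduce both facts from the order formula $\mathrm{ord}_{p^m}(q)=e\,p^{\max(0,m-t)}$ in your notation: the subgroup $\langle q^{p^n}\rangle$ coincides with $\langle q^{p^{n-1}}\rangle$, giving (a), and it has the same order $e$ modulo $p^{n-1}$ as modulo $p^{n}$, so reduction is a surjection of equal-cardinality groups and hence a bijection on each coset, giving (b). Your route is a bit more structural and avoids the explicit index computation; note only that the order-lifting formula you invoke (like the paper's appeal to a primitive root modulo $p^n$) is valid only for odd $p$, so both arguments share the implicit restriction $p\neq 2$.
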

\begin{proof}
Item 1 is trivial. Let us prove Item 2. 
First of all we represent $x\in\Z_{p^n}=\{0,1,2,\dots,p^n-1\}$ as
$x=y+bp^{n-1}$, where $y\in\{0,1,\dots,p^{n-1}-1\}$ and $b\in\{0,1,\dots,p-1\}$.
The block structure of $A_n$ corresponds to the described above representation,
such that $b$'s are numbering our blocks and $y$'s are numbering the elements inside 
the blocks.
The item 2 follows from the next facts
\begin{description}
\item[i)] $O^n(x)=O^n(y)$ if $x\equiv y\mod p^{n-1}$. Where
$O^n(x)=\{y\in Z_{p^n}\;|\;(x,y)\in E(\Gamma_{n,p,q})\}$.
\item[ii)] Let $\phi:\Z_{p^n}\to\Z_{p^{n-1}}$ be defined as 
$\phi(x)\equiv x \mod p^{n-1}$.

Then for any  
$y\in\{0,1,2,\dots,p^{n-1}-1\}$   
$\phi$ defines a bijection $O^{n}(y)\leftrightarrow O^{n-1}(y)$.  
\end{description}

Fact {\bf i)}. To find $q^z \mod p^n$ it suffices to know $z \mod (p-1)p^{n-1}$.
Let $P_x=\{z\in \Z_{(p-1)p^{n-1}}\;|\;\exists a\in\Z\; a\equiv z \mod (p-1)p^{n-1}
\mbox{ and } a\equiv x \mod p^n\}$. One has that 
$O^n(x)=\{q^z\mod p^n\;|\; z\in P_x\}$. By Chinese Remainder Theorem
$P_x=P_y$ if and only if $x\equiv y\mod p^{n-1}$. 
Observe that $O^n(x)=\{q^xq^{bp^n} \mod p^n\;|\;b\in\{0,1,\dots,p-2\}\}$. 

Fact {\bf ii)}. 
Recall that $O^n(x)=\{q^xq^{bp^n}\mod p^n\;|\;b\in\{0,1,\dots,p-2\}\}$ and
$O^{n-1}(x)=\{q^xq^{bp^{n-1}}\mod p^{n-1}\;|\;b\in\{0,1,\dots,p-2\}\}$.
Now, $q^{bp^{n-1}}\equiv q^{bp^n}\mod p^n$. Indeed, $bp^{n-1}-bp^n\equiv 0 \mod (p-1)p^{n-1}$.
It proves fact {\bf ii)} if $q$ is primitive $\mod p^{n-1}$. For non primitive $q$
it suffices to prove that for $b_1,b_2\in\{0,1,\dots,p-2\}$ the  congruence
\begin{equation}\label{congr1}
q^{b_1p^{n-1}}\equiv q^{b_2p^{n-1}}\mod p^{n-1}
\end{equation}
imply the congruence
\begin{equation}\label{congr2}
q^{b_1p^{n-1}}\equiv q^{b_2p^{n-1}}\mod p^{n}
\end{equation}
Let $q\equiv g^r\mod p^n$ for primitive $g$. The first congruence is
equivalent to $(b_1-b_2)rp^{n-1}\equiv 0\mod (p-1)p^{n-2}$. It implies
$(p-1)|(b_1-b_2)r$. So, $(b_1-b_2)rp^{n-1}\equiv 0\mod (p-1)p^{n-1}$ and
the second congruence follows.
\end{proof}

Now it is easy to finish the proof of Theorem~\ref{th_main1}.
First of all $C_{p,n,q}(k)=\trace((A_n)^k)$. Using Lemma~\ref{lm1}, 
Lemma~\ref{lm2} and compatibility of the trace and multiplication 
with the block structure
we get
$$
\trace((A_n)^k)=\trace((A_{n-1})^k)=\dots = \trace((A_1)^k)=(p-1)^k
$$ 

\section{Proof of theorem~\ref{th_main2}}
For $A,B\in\Mat_{d\times d}(\{0,1\})$ we will write $A\preceq B$
if $A_{i,j}=1$ implies $B_{i,j}=1$.
\begin{lemma}\label{new_lm2}
Let $A_n$ be the adjacency matrix of $\Gamma^{+r}_{p,n,q}$. Then
\begin{enumerate}
\item $A_1\preceq\(\begin{array}{ccccc} 1 & 1 & 1 & \dots & 1 \\
                                  1 & 1 & 1 & \dots & 1 \\
                            \vdots &\vdots &\vdots &\dots & \vdots\\
                                  1 & 1 & 1 & \dots & 1
                     \end{array}\) $, if $q$ is primitive $\mod p$. 
If $q$ is not primitive then $A_1$ has the same form with some $1$ changed to
$0$. 

\item for $n>1$ $A_n\preceq\(\begin{array}{cccc} B_1^n & B_2^n &\dots & B_p^n \\
                                           B_1^n & B_2^n &\dots & B_p^n \\
                                       \vdots &\vdots & \dots &\vdots \\
                                           B_1^n & B_2^n &\dots & B_p^n \\
                       \end{array}\)+X$,
where $B_j^n\in\Mat_{p^{n-1}\times p^{n-1}}(\Z)$, $B_1^n+B_2^n+...+B_p^n=A_{n-1}$,
$X\in\Mat_{p^n\times p^n}(\{0,1\}$ with less then $2rp$ rows. 
\end{enumerate}
\end{lemma}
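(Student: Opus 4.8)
The plan is to mirror the proof of Lemma~\ref{lm2}, using the same representation $x=y+bp^{n-1}$ with $y\in\{0,\dots,p^{n-1}-1\}$ and $b\in\{0,\dots,p-1\}$ for both the source and target vertices indexing $A_n$. Writing $O^{n,+r}(x)=\{q^z+c\bmod p^n\mid z\in P_x,\ c=-r,\dots,r\}$, the first point to record is that the perturbed analogue of Fact~{\bf i)} survives verbatim: since $O^{n,+r}(x)$ is obtained from the unperturbed set $O^{n}(x)$ by adding the fixed block $\{-r,\dots,r\}$, and $O^{n}(x)$ depends only on $x\bmod p^{n-1}$, the set $O^{n,+r}(x)$ depends only on $x\bmod p^{n-1}$ as well. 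Hence all $p$ block-rows of $A_n$ coincide, which is exactly the shape asserted in Item~2.

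For the column structure I would track what the reduction $\phi\colon\Z_{p^n}\to\Z_{p^{n-1}}$ does to a perturbed edge. Because $\phi(q^z+c\bmod p^n)=(\phi(q^z)+c)\bmod p^{n-1}$, the map $\phi$ carries $O^{n,+r}(x)$ onto $O^{n-1,+r}(\phi(x))$, so the perturbation commutes with passing from level $n$ to level $n-1$. Away from the block boundaries this is a block-by-block bijection, exactly as in Fact~{\bf ii)}: if the unperturbed neighbour has low part $q^z\bmod p^{n-1}$ at distance more than $r$ from $0$ and from $p^{n-1}-1$, then for every admissible $c$ the shift stays inside the same block $b'$, and its image under $\phi$ records the corresponding level-$(n-1)$ edge without altering the high digit. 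I would use this carry-free lift to define $B_j^n$, assigning each level-$(n-1)$ perturbed edge $(\phi(x),w')$ to the block it lifts to; this forces $B_1^n+\dots+B_p^n=A_{n-1}$ and makes the block matrix dominate the carry-free part of $A_n$.

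The matrix $X$ then collects precisely the edges on which a \emph{carry} occurs, i.e. those $z,c$ for which $q^z\bmod p^{n-1}$ lies within $r$ of a block boundary, so that $q^z+c$ jumps from block $b'$ to $b'\pm1$. The resulting low part falls into $\{0,\dots,r-1\}\cup\{p^{n-1}-r,\dots,p^{n-1}-1\}$, i.e. into one of only $2r$ residues, and there are $p$ blocks, so all carry edges are confined to at most $2rp$ lines, giving the stated count. I expect the main obstacle to be the bookkeeping that makes $B_1^n+\dots+B_p^n$ equal $A_{n-1}$ \emph{exactly}: the sumset $O^{n-1}(\phi(x))+\{-r,\dots,r\}$ may cover a single residue $w'$ through several neighbours at once, so two carry-free lifts can land in different blocks over the same level-$(n-1)$ edge, producing a surplus that must be charged somewhere. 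The delicate step is to fix a canonical lift for each level-$(n-1)$ edge and show that the only genuinely uncovered edges left for $X$ are the boundary-crossing ones counted above, so that $X$ indeed stays supported on the $2rp$ boundary lines; Item~1 for $n=1$ is immediate, since any $0$--$1$ matrix is $\preceq$ the all-ones matrix.
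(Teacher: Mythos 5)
Your overall route is the same as the paper's: all block-rows of $A_n$ coincide because $O^{n,+r}(x)$ depends only on $x\bmod p^{n-1}$; the carry-free edges project under $\phi$ to edges of $\Gamma^{+r}_{p,n-1,q}$; and a carry can only produce a target whose low part lies in $\{0,\dots,r-1\}\cup\{p^{n-1}-r,\dots,p^{n-1}-1\}$, hence at most $2r\cdot p$ exceptional lines for $X$ (the paper calls them rows, in your indexing they are columns of the target vertex; this is immaterial for the trace estimate). You have also put your finger on exactly the point the paper passes over in silence: the difficulty of forcing $B_1^n+\dots+B_p^n$ to equal the $0$--$1$ matrix $A_{n-1}$ while keeping the block matrix a majorant of the carry-free part of $A_n$.

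However, the repair you propose --- a canonical lift per level-$(n-1)$ edge --- does not close this gap; it trades one failure for another. Suppose the edge $(y,w')$ of $\Gamma^{+r}_{p,n-1,q}$ admits two carry-free representations $w'=v_0^{(1)}+c_1=v_0^{(2)}+c_2$ with $v_0^{(1)}\neq v_0^{(2)}$ in $O^{n-1}(y)$, whose (unique, by Fact ii of Lemma~\ref{lm2}) lifts to $O^{n}(x)$ carry distinct high digits $b_1'\neq b_2'$; this occurs whenever two elements of $O^{n-1}(y)$ lie within $2r$ of one another, which cannot be excluded (already for $p=3$, where $O^{n-1}(y)=\{\pm q^{y}\bmod p^{n-1}\}$). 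Declaring the lift through $b_1'$ canonical makes $\sum_j B_j^n$ correct at $(y,w')$, but the genuine edge of $\Gamma^{+r}_{p,n,q}$ into $w'+b_2'p^{n-1}$ is then dominated neither by the block matrix nor by $X$, since $w'$ need not lie within $r$ of a block boundary; recording a $1$ in both blocks restores domination but makes the sum strictly exceed $A_{n-1}$. The workable fix is to abandon $0$--$1$ matrices at level $n-1$ altogether: replace $A_{n-1}$ by the nonnegative integer matrix that counts perturbed edges with multiplicity (the product of the unperturbed adjacency matrix with the translation matrix of $\{-r,\dots,r\}$). That matrix dominates $A_{n-1}$, still has row sums at most $(2r+1)p$, and admits the block decomposition with exact equality of the sum of the blocks; since the proof of Theorem~\ref{th_main2} uses only nonnegativity, the row-sum bound, the support bound on $X$, and Lemma~\ref{lm1}, the final estimate survives. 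As written, though, the delicate step you flagged is precisely where your argument (and, for that matter, the paper's two-line proof) is incomplete.
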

\begin{proof}
Item 1 is trivial. The prove of Item 2 proceeds the same way as the one 
of Theorem~\ref{th_main1}, but now we have to take into account that 
$y+s (\mod p^{n-1})$ may be different from $y+s (\mod p^{n})$. Observe, that
$y+s (\mod p^{n-1})=y+s (\mod p^{n})$
for $r \geq y\leq p^{n-1}-1-r$. So, for each $b\in \{0,1,\dots,p-1\}$ 
there exists
only $2r$ of $y\in\{0,1,\dots,p^{n-1}-1\}$ where the rows of $X$ are non zero.   
\end{proof}
Now we are ready to prove Theorem~\ref{th_main2}. 
$$
C^{+r}_{n,p,q}(k)=c_n=\trace(A_n^k)\leq\trace(A_{n-1}^k)+\Delta=c_{n-1}+\Delta
$$
$\Delta$ is the sum of the traces of $2^{k-1}$ matrices $P_s$, each of them is a product
of $k$ matrices containing $X$. Observe, that
$\trace(P_s)\leq 2rp((2r+1)p)^k$. Indeed,  this is a number of $k$-periodic paths such that
some steps of the path correspond to the matrix  $X$ and some to the matrix $B$.
The estimate follows from the number of non-zero rows of $X$, and that each row of $X$ and
$B$ contains no more than $(2r+1)p$ ones. Noting that $c_1\leq p$ we get
$c_n\leq p+rp(2(2r+1)p)^k(n-1)$.

{\bf Acknowledgments} The proof of Lemma~\ref{lm1} was suggested by Edgardo Ugalde.
The author thanks Igor Shparlinski for useful suggestions. The work were partially 
supported by PROMEP grant UASLP-CA21 and by
CONACyT grant 50312.

\ignor{
\bibitem{BKS} J.~Bourgain, S. V. Konyagin and
I. E. Shparlinski,
`Product sets of rationals, multiplicative
translates of subgroups in residue rings
and fixed points of the discrete logarithm',
{\it Intern.\ Math.\ Research Notices\/},
{\bf 2008} (2008), Article ID rnn090, 1--29
(Corrigenda {\it Intern.\ Math.\ Research Notices\/},
{\bf 2009} (2009), 3146-3147).

\bibitem{CobZah} C. Cobeli and A. Zaharescu, `An exponential congruence with
solutions in primitive roots', {\it Rev. Roumaine Math. Pures Appl.\/},
{\bf 44} (1999),   15--22.

\bibitem{CoSh} D. Coppersmith and I. E. Shparlinski, `On polynomial approximation of the
discrete logarithm and the Diffie--Hellman mapping',
{\it J. Cryptology\/}, {\bf 13} (2000),  339--360.

\bibitem{GoldRos} O. Goldreich and V. Rosen, `On the security
of modular exponentiation with
application to the construction of pseudorandom generators',
{\it J.  Cryptology\/}, {\bf 16} (2003), 71--93.

\bibitem{Hold} J. Holden, `Fixed points and two cycles of the discrete logarithm',
 {\it Lect. Notes in Comp. Sci.\/}, Springer-Verlag, Berlin,   {\bf 2369}
(2002), 405--416.

\bibitem{HoldMor1} J. Holden and P. Moree, `New conjectures and results for small
cycles of the discrete logarithm',
{\it High Primes and Misdemeanours: Lectures in Honour of
the 60th Birthday of Hugh Cowie Williams\/},
Fields Institute Communications, vol.41, Amer. Math. Soc., 2004, 245--254.

\bibitem{HoldMor2} J. Holden and P. Moree, `Some heuristics and
 and results for small cycles of the discrete
logarithm', {\it Math. Comp.\/},  {\bf 75} (2006), 419--449.

\bibitem{MeWi} G. C. Meletiou and A. Winterhof,
`Interpolation of the double discrete logarithm',
{\it Lect. Notes in Comp. Sci.\/}, Springer-Verlag, Berlin,   
{\bf  5130} (2008), 1--10.

\bibitem{PaSu} S. Patel and G. S. Sundaram,
 `An efficient discrete $\log$ pseudo random generator',
{\it  Lect. Notes in Comp. Sci.\/}, Springer-Verlag, Berlin, {\bf 1462} (1999), 35--44.

\bibitem{Silv} J.~H.~Silverman, {\it The arithmetic of elliptic
curves\/},
Springer-Verlag, Berlin, 1995.

\bibitem{Zhang} W. P. Zhang,
`On a problem of Brizolis',
{\it Pure Appl. Math.\/}, {\bf 11} (1995), suppl., 1--3 (in Chinese).
 
\end{thebibliography}
}
\end{document}